\renewcommand{\O}{\mathcal{O}}
\newcommand{\E}{\mathcal E}
\newcommand{\F}{\mathcal F}
\newcommand{\I}{\mathcal I}
\newcommand{\ox}{\otimes}
\newcommand{\lra}{\longrightarrow}
\renewcommand{\L}{\mathcal L}
\newcommand{\w}{\omega}
\newcommand{\wt}{\widetilde}
\newcommand{\ol}{\overline}
\renewcommand{\:}{\colon}
\renewcommand{\deg}{\mathrm{deg}\,}
\newtheorem{theorem}{Theorem}
\newtheorem{proposition}[theorem]{Proposition}
\theoremstyle{plain}
\font\smallrm=cmr8
\font\smallsc=cmcsc10
\font\smallsl=cmsl10
\begin{document}
\author
[{\smallrm EDUARDO ESTEVES}]
{Eduardo Esteves}
\title
[{\smallrm THE STABLE HYPERELLIPTIC LOCUS IN GENUS 3}]
{The stable hyperelliptic locus in genus 3: An application of Porteous Formula}
\begin{abstract} We compute the class of the closure of the locus of 
hyperelliptic curves in the moduli space of stable genus-3 curves in 
terms of the tautological class $\lambda$ and the boundary classes 
$\delta_0$ and $\delta_1$. The expression of this class is known, but 
here we compute it directly, by means of Porteous 
Formula, without resorting to blowups or test curves.
\end{abstract}

\thanks{Supported by 
CNPq, Proc.~303797/2007-0 and 473032/2008-2, and FAPERJ, 
Proc.~E-26/102.769/2008 and E-26/110.556/2010.}

\maketitle

\section{Introduction}

Porteous Formula gives an expression for the class of a degeneracy scheme 
of a map of vector bundles on a smooth variety in terms of the Chern classes 
of the bundles, as long as the 
degeneracy scheme has no excess, that is, has the expected codimension; 
see \cite{F}, Thm.~14.4, p.~254. In \cite{HM}, Question 3.134, p.~169, 
Harris and Morrison posed the question of finding a Porteous-type formula 
for maps between torsion-free sheaves.  

Their question was motivated by the following problem. 
Let $k$ be an algebraically closed field of characteristic different from 2. 
Let $M_3$ be the moduli space of genus-3 smooth, projective, 
connected curves, and 
$\ol M_3$ its compactification by (Deligne--Mumford) stable curves 
over $k$. The vector space $\text{Pic}(M_3)\ox\mathbb Q$ is generated by 
a certain class $\lambda$. This class is the restriction of one 
in $\text{Pic}(\ol M_3)\ox\mathbb Q$, which will be denoted by the 
same symbol. The latter space is generated by $\lambda$, $\delta_0$ and 
$\delta_1$, the latter two being boundary classes.

Let $H\subseteq M_3$ be the locus parameterizing hyperelliptic curves. 
It is a closed subvariety of codimension 1. Let $\ol H$ be its closure 
in $\ol M_3$. We may ask what are the expressions for the class $[H]$ as a 
multiple of $\lambda$ and $[\ol H]$ as a linear combination of $\lambda$, 
$\delta_0$ and $\delta_1$. In \cite{HM}, pp.~162--188, it is shown that

\begin{theorem}\label{thm}
\begin{align*}
[H]=&9\lambda,\\
[\ol H]=&9\lambda-\delta_0-3\delta_1.
\end{align*}
\end{theorem}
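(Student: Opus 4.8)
The plan is to exhibit $H$ as the locus where a multiplication map of rank-$6$ bundles degenerates, and read off its class from Porteous Formula. Let $\pi\:\mathcal C\lra\ol M_3$ be the universal curve, $\w_\pi$ its relative dualizing sheaf, $\E:=\pi_*\w_\pi$ the Hodge bundle (so $\lambda=c_1(\E)$), and $\F:=\pi_*\w_\pi^{\ox2}$. Every stable genus-$3$ curve $C$ satisfies $h^0(\w_C)=3$ and $h^0(\w_C^{\ox2})=6$ with $h^1(\w_C^{\ox2})=0$; hence, by cohomology and base change, $\E$ and $\F$ are locally free of ranks $3$ and $6$ on all of $\ol M_3$, and $\pi_!\w_\pi^{\ox2}=\F$. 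Multiplication of sections gives $\phi\:\mathrm{Sym}^2\E\lra\F$, a map of rank-$6$ bundles. By Max Noether's theorem $\phi$ is an isomorphism at a smooth $C$ precisely when $C$ is non-hyperelliptic, the image for hyperelliptic $C$ being the $5$-dimensional space pulled back from the $g^1_2$. Thus over $M_3$ the reduced locus $\{\det\phi=0\}$ is exactly $H$, and since $\det\phi$ is a section of $\det\F\ox(\det\mathrm{Sym}^2\E)^{-1}$, Porteous Formula---here just the vanishing of a determinant---gives $[H]=c_1(\F)-c_1(\mathrm{Sym}^2\E)$.

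I would then compute the two Chern classes. By the splitting principle $c_1(\mathrm{Sym}^2\E)=4c_1(\E)=4\lambda$. For $\F$ I would run Grothendieck--Riemann--Roch on $\pi$: with $K=c_1(\w_\pi)$, the coefficient of $K^2$ in $\ch(\w_\pi^{\ox2})\,\Td(T_\pi)$ is $\tfrac{13}{12}$, so $\pi_*$ produces $\tfrac{13}{12}\kappa_1$ with $\kappa_1=\pi_*(K^2)$; the standard node correction computed by Mumford adds $\tfrac1{12}\delta$, and with $\kappa_1=12\lambda-\delta$ this yields $c_1(\F)=13\lambda-\delta$, where $\delta=\delta_0+\delta_1$. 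Over $M_3$ this already gives $[H]=13\lambda-4\lambda=9\lambda$, and over $\ol M_3$ the determinantal \emph{divisor} $D:=\{\det\phi=0\}$ has class
\[
[D]=c_1(\F)-c_1(\mathrm{Sym}^2\E)=9\lambda-\delta_0-\delta_1.
\]

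The crux is to identify $D$ with $\ol H$ along the boundary, i.e.\ to find the order of vanishing of $\det\phi$ on $\delta_0$ and $\delta_1$. A generic $C\in\delta_0$ is an irreducible $1$-nodal curve whose canonical model is a $1$-nodal plane quartic; there $\phi$ remains an isomorphism, so $\det\phi$ does not vanish and $\delta_0$ is not a component of $D$. For $C=E\cup_pC'\in\delta_1$, with $E$ elliptic and $C'$ of genus $2$, a short computation gives $H^0(\w_C)=\langle\eta,\w_1,\w_2\rangle$ with $\eta$ supported on $E$ and $\w_1,\w_2$ on $C'$; the mixed products $\eta\w_1,\eta\w_2$ then vanish identically, while $\w_1^2,\w_1\w_2,\w_2^2$ stay independent because bicanonical multiplication on the genus-$2$ curve $C'$ is an isomorphism. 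Hence $\phi$ has rank exactly $4$ and $\delta_1\subset D$. Since $\ol H$ occurs in $D$ with multiplicity $1$ (a generic hyperelliptic curve has corank exactly $1$), and $\delta_0,\delta_1$ are the only boundary divisors in genus $3$, we may write $D=\ol H+m\,\delta_1$; to pin down $m$ I would use the local model at $\delta_1$, where in bases adapted to the $4$-dimensional image $\phi$ is block-triangular with an invertible $4\times4$ block and a $2\times2$ Schur complement $M(t)$ vanishing at $t=0$, $t$ a parameter smoothing the node, so that $\det\phi\sim\det M(t)$.

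The main obstacle is to prove $\mathrm{ord}_{t=0}\det M(t)=2$, equivalently that the linear term $M'(0)$ is invertible. This is exactly where plain Porteous over-counts and the torsion-free viewpoint of \cite{HM} enters: one restricts $\phi$ along $\delta_1$ and analyses the induced map of sheaves as the node is smoothed, extracting the leading coefficients of the deforming products $\eta\w_1,\eta\w_2$; their nondegeneracy gives $m=2$. Granting this,
\[
[\ol H]=[D]-2\,\delta_1=(9\lambda-\delta_0-\delta_1)-2\,\delta_1=9\lambda-\delta_0-3\delta_1,
\]
completing the proof. I expect essentially all the difficulty to lie in this order-of-vanishing computation along $\delta_1$; the rest is Chern-class bookkeeping.
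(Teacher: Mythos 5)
Your route is genuinely different from the paper's. You work entirely on the base with the bicanonical multiplication map $\phi\:\mathrm{Sym}^2\E\to\pi_*\w_\pi^{\ox 2}$ between rank-$6$ bundles, using Max Noether's theorem to identify the determinantal divisor with $\ol H$ plus boundary, and GRR to get $c_1(\pi_*\w_\pi^{\ox 2})=13\lambda-\delta$. The paper instead works on the total space of a one-parameter family: it builds a locally free substitute $\F'$ for the sheaf of first-order jets of $\w_{C/S}$ via a pushout, twists by $-X$ to kill the excess component along the elliptic tail, applies Porteous to get a zero-dimensional degeneracy scheme containing the Weierstrass points of hyperelliptic fibers, and then computes local multiplicities at finitely many boundary points before dividing by $8$. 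Your approach buys a cleaner global bookkeeping (no twist, no excess-dimensional locus, and $[H]=9\lambda$ falls out of Noether immediately); the paper's buys local computations at isolated points rather than along a divisor, and directly addresses the Harris--Morrison question about extending jet bundles across nodes. Your Chern-class arithmetic is correct: $c_1(\mathrm{Sym}^2\E)=4\lambda$, $c_1(\F)=13\lambda-\delta$, hence $[D]=9\lambda-\delta_0-\delta_1$, and the rank analysis along $\delta_0$ and $\delta_1$ (corank $0$ and $2$ respectively, since $\eta\w_1=\eta\w_2=0$ for an elliptic tail) is right.

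However, as you yourself concede with ``Granting this,'' the proof is not complete: the entire content of the coefficient $-3$ of $\delta_1$ sits in the claim $\mathrm{ord}_{t=0}\det M(t)=2$, i.e.\ that the $2\times 2$ Schur complement has invertible linear term as the node of $E\cup_p C'$ is smoothed. Corank $2$ only gives $\mathrm{ord}\ge 2$, and nothing in your write-up rules out $m\ge 3$; pinning down $m=2$ requires an explicit local computation on the versal deformation (tracking the first-order behaviour of the products $\eta\w_i$ in terms of the smoothing parameter $t=xy$), which is exactly the kind of analysis the paper carries out in its Proposition 3 at the node $N$ and at the Weierstrass points of $Y$. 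A smaller but real gap of the same nature: ``$\ol H$ occurs in $D$ with multiplicity $1$ because the corank is $1$'' is not an argument --- corank $1$ gives order of vanishing $\ge 1$, and equality needs the first-order deformation of a generic hyperelliptic curve to move $\det\phi$ off zero (true, and standard, but it must be proved, e.g.\ by the Weierstrass-point count $72\lambda=8\cdot 9\lambda$ or by a tangent-space computation). Until these two order-of-vanishing statements are established, the proposal derives only $[D]=9\lambda-\delta_0-\delta_1$ and the shape $[\ol H]=9\lambda-\delta_0-(1+m)\delta_1$, not the theorem.
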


(The first formula above had already appeared as a special case of Mumford's 
formula for $[\mathcal H]_Q$ on \cite{Mum}, p.~314.) 

The strategy for obtaining the formula for $[H]$, culminating on page 164 of 
loc.~cit., reviewed in Section 2, was to consider a general family 
of smooth curves, a natural map of vector bundles over its total space 
whose top degeneracy scheme parameterizes the Weierstrass points of 
hyperelliptic fibers of $\pi$, apply Porteous Formula to compute the class of 
this scheme, and then compute the pushforward of this class to the base of the 
family. However, according to \cite{HM}, p.~169, even though ``trying to 
extend the application of Porteous' formula'' to a general family of stable curves 
``is the most obvious approach'' to obtaining a formula for $[\ol H]$, 
the problem is that a certain bundle of jets, namely \eqref{FFF}, 
defined on the smooth locus of the family, ``cannot be extended to a vector bundle over the nodes of 
fibers of the family of curves.''
This motivated 
their question, mentioned above. Not disposing of the asked-for Porteous-type 
formula, they proposed a less direct approach, by means of the so-called 
test curves, culminating with the formula for $[\ol H]$ on page 188 of 
loc.~cit..

In \cite{Diaz} Diaz 
proposes a blowup procedure to obtain a Porteous-type formula 
for maps between torsion-free sheaves. Though no explicit formula is 
produced, as an example, 
the procedure is carried out to obtain a different and more direct 
proof that $[\ol H]=9\lambda-\delta_0-c\delta_1$ for a certain 
$c\in\mathbb Z$, not computable because of excess; see Prop.~1, p.~510 of 
loc.~cit..

In these notes we will see that the bundle of jets 
which ``cannot be extended to a vector bundle'' 
does in fact extend; 
see Section 3. As observed in \cite{HM}, p.~169, the obvious extension, 
namely the sheaf of jets, or principal parts, 
\eqref{notfree}, is not a bundle. However, it becomes so after a pushout 
construction, \eqref{pushout}. 
So we do get a map of vector bundles over the total space $C$ of a 
family of stable curves $C/S$, namely \eqref{nuEF}, 
to which we can consider applying 
Porteous Formula. In contrast to Diaz's approach, no blowups are necessary. 
But, as in \cite{Diaz}, Porteous Formula cannot 
be directly applied because 
of excess. 

Though the excess could be handled in an {\it ad hoc} way, 
we will see that a simple ``twist,'' typical of the theory of 
limit linear series explained in \cite{HM}, Ch.~5, is enough to 
produce a map of vector bundles over $C$, namely \eqref{nu'EF}, 
whose top degeneracy scheme $D$ has the expected 
codimension by Proposition \ref{finite}, the class of which can thus be 
computed by Porteous Formula. Its pushforward to $S$ is given by 
Proposition~\ref{porteous}.

As in \cite{Diaz}, $D$ comprises more than the Weierstrass 
points of the (smooth) hyperelliptic fibers of $\pi$. To get the formula 
for $[\ol H]$ of Theorem \ref{thm}, the excess points 
must be counted out. To remove them, we need to establish 
their multiplicities in $D$. This is Proposition \ref{mult}. The three 
Propositions imply the Theorem.

Though sheaves of jets, or principal parts, for a family of stable curves 
are not vector bundles, 
there are vector bundle substitutes that agree with them on the smooth locus 
of the family. 
These substitutes have appeared, in various degrees of generality, in 
\cite{E1}, \cite{E2}, \cite{G1}, \cite{G2}, \cite{LT1} and \cite{LT2}. It may 
thus well be that a Porteous-type formula for maps between torsion-free 
sheaves is not necessary for dealing with stable curves. What 
would certainly be useful instead, 
is a way of dealing with excessive degeneracy schemes, 
a phenomenon typical in enumerative questions.

The layout of these notes is the following: In Section 2 we review the 
approach in \cite{HM} for computing $[H]$. In Section 3, we show how 
to produce a map of vector bundles over the total space $C$ of a general 
family of stable curves $\pi\:C\to S$, whose top degeneracy scheme $D$ has the 
expected codimension, and contains the eight Weierstrass points on each 
(smooth) hyperelliptic fiber, among others on singular fibers; see 
Proposition \ref{finite}. In Section 4, we apply Porteous Formula to compute 
the pushforward of the class of $D$ to $S$; see Proposition \ref{porteous}. 
To conclude the proof of Theorem~\ref{thm}, in Section 5, 
we compute the multiplicity with which the points 
on singular fibers appear in $D$, our Proposition \ref{mult}, thereby 
finishing the computation of $[\ol H]$.

These notes report on work started during a visiting professorhip 
of the author to the Universit\`a degli Studi di Torino. The 
author would like to thank Regione Piemonte for 
financing his position. Also, he would 
like to thank the Dipartimento di Matematica of the University, specially 
Prof.~Alberto Conte and Prof.~Marina Marchisio, for the warm hospitality 
extended. Finally, he would like to thank Letterio Gatto for many inspiring 
discussions on the subject.

\section{Smooth curves}

The formulas in Theorem \ref{thm} 
can be obtained by considering the Picard group of the 
functor, or that of the associated stack. Given a 
flat, projective map $\pi\:C\to S$ with 
smooth, connected fibers of dimension 1 and genus 3, 
the class $[H]$ corresponds to a 
class $h^\pi$ on $S$, and $\lambda$ to a class $\lambda^\pi$, the 
first Chern class of the rank-3 locally free sheaf 
$\pi_*\Omega^1_{C/S}$. To show that $[H]=9\lambda$, it is 
equivalent to show that $h^\pi=9\lambda^\pi$ for every such $\pi$. 

Likewise, to show that $[\ol H]=9\lambda-\delta_0-\delta_1$, we 
may consider the corresponding classes $\ol h^\pi$, $\lambda^\pi$, 
$\delta_0^\pi$ and $\delta_1^\pi$ on the target of 
flat, projective maps $\pi\:C\to S$ 
whose fibers are stable curves of (arithmetic) genus 3, and show that 
\begin{equation}\label{olh}
\ol h^\pi=9\lambda^\pi-\delta_0^\pi-3\delta_1^\pi.
\end{equation}
Since $\ol M_3$ is stackwise smooth and complete, we need only consider 
one-dimensional smooth projective targets $S$, 
and show the above formula holds 
for a general such map $\pi$. Alternatively, 
we may show that the degrees on both sides of the fomula are equal 
for three special such maps $\pi$, yielding three linearly independent 
triples $(\deg(\lambda^\pi),\deg(\delta_0^\pi),\deg(\delta^1_\pi))$. The 
latter is known as the method of test curves, employed in \cite{HM}.

The strategy to compute $[H]$ in \cite{HM} is as follows. 
Let $\pi\:C\to S$ be a projective, flat map between schemes 
of finite type over the algebraically closed field $k$. Assume 
the (geometric) fibers of $\pi$ are smooth, connected curves of genus 3. 
Given a (closed) point $s\in S$, we will let 
$C_s:=\pi^{-1}(s)$. 

Let $\Omega^1_{C/S}$ be the relative 
cotangent sheaf. Set $\E:=\pi^*\pi_*\Omega^1_{C/S}$. 
At a (closed) point $P$ of $C$, letting $s:=\pi(P)$, we have
$$
\E|_P=H^0(C_s,\Omega^1_{C_s/k}).
$$
Since the fibers of $\pi$ have genus three, 
$\E$ is locally free of rank 3.

Let $\F$ be the relative bundle of first-order jets, or 
principal parts, of $\Omega^1_{C/S}$. 
In other words,
\begin{equation}\label{FFF}
\F:=p_{1*}\Big(p_2^*\Omega^1_{C/S}\ox\frac{\O_{C\times_SC}}
{\I_{\Delta|C\times_S C}^2}\Big),
\end{equation}
where $p_i\:C\times_SC\to C$ is the projection onto the indicated factor, 
for $i=1,2$, and $\Delta\subset C\times_SC$ is the diagonal. 
At a point $P\in C$, letting $s:=\pi(P)$, we have 
$$
\F|_P:=H^0(C_s,{\Omega^1_{C_s|k}}_{\Big|{\text{Spec}\Big(\frac{\O_{C_s,P}}
{\mathfrak m_{C_s,P}^2}\Big)}}),
$$
were $\mathfrak m_{C_s,P}$ is the maximal ideal of the local ring 
$\O_{C_s,P}$. Since $C_s$ is smooth, $\F$ is locally free of rank 2. 

Since $\pi^*\pi_*\Omega^1_{C/S}=p_{1*}p_2^*\Omega^1_{C/S}$, there is a 
natural map $\nu\:\E\to\F$. At a given $P\in C$, the map is an 
evaluation map: Given a local parameter $t$ of $C_s$ at $P$, where 
$s:=\pi(P)$, using $dt$ to trivialize $\Omega^1_{C_s|k}$ at $P$, 
the map $\nu|_P$ assigns to a global differential form, 
whose germ at $P$ can be written as $fdt$ for $f\in\O_{C_s,P}$, 
the class $(f(0)+f'(0)t)dt\text{ (mod }t^2dt\text{)}$, 
where $f':=df/dt$. Thus, $\nu|_P$ fails to be surjective if and only if 
$H^0(C_s,\Omega^1_{C_s|k}(-2P))$ fails to have codimension 2 
in $H^0(C_s,\Omega^1_{C_s|k})$, that is, if and only if $C_s$ 
is hyperelliptic, and $P$ is a Weierstrass point of $C_s$. 

Let $D$ be the top degeneracy scheme of the map $\nu$, supported on the 
set of points $P\in C$ where $\nu|_P$ fails to be surjective. 
Then, assuming the general fiber of $\pi$ is nonhyperelliptic, 
$D$ has codimension at least 2. This is however the expected codimension, 
thus the actual codimension. Then, assuming $S$ is smooth, or at least 
Cohen--Macaulay, Porteous Formula (\cite{F}, Thm.~14.4, p.~254 or 
\cite{HM}, Thm.~3.114, p.~161) gives an expression for $[D]$:
$$
[D]=c_2(\E^*-\F^*)\cap [C].
$$

There are 8 Weierstrass points on a hyperelliptic curve of genus 3. Thus
$$
8h^\pi=\pi_*[D]=\pi_*(c_2(\E^*-\F^*)\cap [C]).
$$
If we compute the right-hand side of the formula above we will get 
$72\lambda^\pi$. We will not do this here, as in Sections 3, 4 and 5 
we will use 
the same procedure to prove the more general \eqref{olh}.

\section{Stable curves}

In \cite{HM}, the expression for $[\ol H]$ is computed by the method of 
test curves. Instead, we will use a ``twist'' of the same method used to 
compute $[H]$. 

Let $\pi\:C\to S$ be a projective, flat map between schemes of finite 
type over the algebraically closed field $k$. Assume 
the fibers of $\pi$ are stable curves of genus 3.

As pointed out in \cite{HM}, the relative cotangent sheaf 
$\Omega^1_{C/S}$ is not locally free, but can be replaced by the 
(invertible) relative dualizing sheaf $\omega_{C/S}$, 
equal to the cotangent sheaf 
away from the nodes of the fibers. The restriction 
$\w_{C_s}$ of $\w_{C/S}$ to a fiber $C_s$ is Rosenlicht's sheaf of 
regular differential forms, those being the meromorphic forms regular 
everywhere, except over a node, where the form must have at most 
simple pole at each branch with zero residue sum.  

Set 
$\E:=\pi^*\pi_*\omega_{C/S}$. As in Section 2, the sheaf $\E$ is 
locally free of rank 3. On the other hand, \cite{HM} asserts that the 
restriction to the smooth locus of $C/S$ of 
\begin{equation}\label{notfree}
p_{1*}\Big(p_2^*\omega_{C/S}\ox\frac{\O_{C\times_SC}}
{\I_{\Delta|C\times_S C}^2}\Big),
\end{equation}
where, as before, $p_i\:C\times_SC\to C$ is the 
projection onto the indicated factor, 
for $i=1,2$, and $\Delta\subset C\times_SC$ is the diagonal, 
does not extend to a locally free sheaf 
on the whole $C$. This is false. 

It is true, as pointed out in \cite{HM}, that \eqref{notfree}, the 
sheaf of first-order principal parts of $\omega_{C/S}$ is not locally 
free, but there is a locally free substitute that coincides with that 
sheaf away from the nodes. 
In the case at hand, it is easy to produce the substitute, 
by considering the pushout for the following diagram of maps:
$$
\begin{CD}
0 @>>> \omega_{C/S}\ox\Omega^1_{C/S} @>>> 
p_{1*}\Big(p_2^*\omega_{C/S}\ox\frac{\O_{C\times_SC}}
{\I_{\Delta|C\times_S C}^2}\Big) @>>> \omega_{C/S} @>>> 0\\
@. @VVV @. @. @.\\
@. \omega_{C/S}^{\ox 2}
\end{CD}
$$
where the exact sequence is obtained from the natural exact sequence
$$
0 \lra \frac{\I_{\Delta|C\times_S C}}{\I_{\Delta|C\times_S C}^2} \lra
\frac{\O_{C\times_SC}}{\I_{\Delta|C\times_S C}^2} \lra 
\frac{\O_{C\times_SC}}{\I_{\Delta|C\times_S C}} \lra 0,
$$
and the vertical map is obtained from the ``canonical class'' 
$\Omega^1_{C/S}\to\omega_{C/S}$ by tensoring with $\omega_{C/S}$. 
The pushout construction completes the above diagram to a map of 
short exact sequences:
\begin{equation}\label{pushout}
\begin{CD}
0 @>>> \omega_{C/S}\ox\Omega^1_{C/S} @>>> 
p_{1*}\Big(p_2^*\omega_{C/S}\ox\frac{\O_{C\times_SC}}
{\I_{\Delta|C\times_S C}^2}\Big) @>>> \omega_{C/S} @>>> 0\\
@. @VVV @VVV @| @.\\
0 @>>> \omega_{C/S}^{\ox 2} @>>> \F @>>> \omega_{C/S} @>>> 0
\end{CD}
\end{equation}
Of course, $\F$ is locally free of rank 2.

As before, there is a natural homomorphism from $\E$ to \eqref{notfree}, 
which can be composed to a homomorphism 
\begin{equation}\label{nuEF}
\nu\:\E\to\F.
\end{equation}
It restricts, 
over the open subset of points of $S$ parameterizing smooth fibers, to 
the map of locally free sheaves considered in Section 2. In fact, away 
from the nodes, the description is the same: If $P\in C$ is not a node 
of $C_s$, where $s:=\pi(P)$, then, considering a local parameter $t$ of 
$C_s$ at $P$, and using $dt$ to trivialize 
$\Omega^1_{C_s|k}$ at $P$, 
the map $\nu|_P$ assigns to a regular differential form, 
whose germ at $P$ can be written as $fdt$ for $f\in\O_{C_s,P}$, 
the class $(f(0)+f'(0)t)dt\text{ (mod }t^2dt\text{)}$, 
where $f':=df/dt$. 

Assume now that $S$ is one-dimensional and $\pi$ is ``general.'' 
More precisely, assume $C$ is nonsingular, the general fiber of $\pi$ 
is smooth, and the (finitely many) singular fibers have only one 
singularity. There are thus two types of singular fibers: 
an irreducible curve $Z$ with a node 
whose two branches on the normalization $\wt Z$ are in general 
position with respect to the canonical system; and the union of 
a genus-1 curve $X$ with a genus-2 curve $Y$ meeting transversally at 
a point $N$ which is not a Weierstrass point of $Y$. From the arguments 
we will use, we may harmlessly assume that there are just two singular 
fibers, $Z$ and $X\cup Y$. 

The top degeneracy scheme of $\nu$ has dimension 1. Indeed, it contains 
the curve $X$. This is because $\w_{C/S}|_X=\O_X(N)$, whose space of 
global sections has dimension 1. Since the dimension of the degeneracy scheme 
is not the 
minimum possible, the so-called expected dimension, 
which in this case is 
zero, we say that we have excess. To deal with excess in Intersection 
Theory is generally hard. In our case, the excess could be handled in an 
\emph{ad hoc} way. We will nonetheless avoid it.

What we will do is replace $\w_{C/S}$ by a ``twisted'' sheaf: 
$$
\L:=\w_{C/S}(-X):=\w_{C/S}\ox\O_C(-X).
$$
We will set $\E':=\pi^*\pi_*\L$, and let $\F'$ be the sheaf obtained 
by the pushout construction, part of the map of short exact 
sequences:
\begin{equation}\label{pout}
\begin{CD}
0 @>>> \L\ox\Omega^1_{C/S} @>>> 
p_{1*}\Big(p_2^*\L\ox\frac{\O_{C\times_SC}}
{\I_{\Delta|C\times_S C}^2}\Big) @>>> \L @>>> 0\\
@. @VVV @VVV @| @.\\
0 @>>> \L\ox\omega_{C/S} @>>> \F' @>>> \L @>>> 0
\end{CD}
\end{equation}
As before, there is a natural map 
\begin{equation}\label{nu'EF}
\nu'\:\E'\to\F'.
\end{equation}

\begin{proposition}\label{finite} 
The sheaves $\E'$ and $\F'$ are locally free 
of ranks $3$ and $2$, respectively. In addition, the top degeneracy scheme 
of $\nu'$ is zero-dimensional, and consists of the following 
points:
\begin{enumerate}
\item The $8$ Weierstrass points of each smooth hyperelliptic fiber 
of $\pi$;
\item The node of $Z$;
\item The node $N$ of $X\cup Y$;
\item The $3$ points $A\in X-\{N\}$ such that $2A\equiv 2N$; 
\item The $6$ Weierstrass points of $Y$.
\end{enumerate}
\end{proposition}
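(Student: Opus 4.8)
The plan is to reduce the statement, away from the nodes, to a purely fiberwise question about $2$-jets, and to handle the two nodes by a direct local computation of the canonical map $\Omega^1_{C/S}\to\w_{C/S}$. That local computation is where the only genuinely new phenomenon occurs, so I expect it to be the main obstacle.

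First I would establish local freeness. That $\F'$ has rank $2$ is immediate from the bottom row of \eqref{pout}, which is an extension of the invertible sheaves $\L$ and $\L\ox\w_{C/S}$. For $\E'=\pi^*\pi_*\L$ I would verify $h^0(C_s,\L|_{C_s})=3$ on every fiber and invoke cohomology and base change. On a fiber not meeting $X$ one has $\L|_{C_s}=\w_{C_s}$, so $h^0=3$; likewise $\L|_Z=\w_Z$ and $h^0(\w_Z)=p_a(Z)=3$. On $X\cup Y$ I would use $\O_C(X+Y)|_{X\cup Y}\cong\O_{X\cup Y}$ to get $\O_C(X)|_X\cong\O_X(-N)$ and $\O_C(X)|_Y\cong\O_Y(N)$, hence
\[
\L|_X\cong\O_X(2N),\qquad \L|_Y\cong\w_Y,
\]
and then read $h^0(\L|_{X\cup Y})=3$ off the Mayer--Vietoris sequence: the evaluations $H^0(\O_X(2N))\to\L|_N$ and $H^0(\w_Y)\to\L|_N$ are each surjective, so their difference has rank $1$ and $h^0=2+2-1=3$.

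Away from the nodes, $\F'$ is the honest sheaf of $1$-jets, so at a smooth point $P$ of a fiber $C_s$ the map $\nu'|_P$ is the $2$-jet evaluation $H^0(\L|_{C_s})\to J^1_P(\L|_{C_s})$, of rank $3-h^0(\L|_{C_s}(-2P))$; thus $P$ lies in the top degeneracy scheme exactly when $h^0(\L|_{C_s}(-2P))\ge 2$. On a smooth fiber $\L|_{C_s}=\w_{C_s}$, recovering verbatim the criterion of Section 2 and the $8$ Weierstrass points of (1). On $X\cup Y$ I would apply the same test component by component using the restrictions above. For $P\in X\setminus\{N\}$ the twist gives $\L|_X(-2P)=\O_X(2N-2P)$ of degree $0$, so an extra section appears precisely when $2P\equiv 2N$; on the genus-$1$ curve $X$ these are the $4$ translates of $N$ by $2$-torsion, of which $3$ differ from $N$, yielding (4). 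For $P\in Y\setminus\{N\}$ one has $\L|_Y(-2P)=\w_Y(-2P)$ of degree $0$, with an extra section exactly when $2P\equiv K_Y$, i.e.\ $P$ is one of the $6$ Weierstrass points of the genus-$2$ curve $Y$, yielding (5); the hypothesis that $N$ is not a Weierstrass point of $Y$ keeps these disjoint from $N$. On $Z$ the ``general position'' hypothesis is precisely what guarantees that $\w_Z$ separates $2$-jets at every smooth point, so no smooth point of $Z$ degenerates. In each case the degenerate points form a finite set.

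The crux is the behavior at the two nodes, where $\F'$ is the pushout rather than a jet bundle, and I would exploit its filtration $0\to\L\ox\w_{C/S}\to\F'\to\L\to 0$. The composite $\E'\to\F'\to\L$ is evaluation at $P$, so $\nu'|_P$ drops rank as soon as the complementary map, sending sections vanishing at $P$ into $(\L\ox\w_{C/S})|_P$, vanishes. To compute this I would work in local branch coordinates $x,y$ at the node, where $\w_{C/S}$ is generated by $\eta:=dx/x=-dy/y$ and the canonical map $\Omega^1_{C/S}\to\w_{C/S}$ sends $dx\mapsto x\eta$ and $dy\mapsto -y\eta$. A section vanishing at the node has the form $a_1x+\cdots$ on one branch and $b_1y+\cdots$ on the other, so its image in $\w_{C/S}$ is $a_1x\eta+\cdots$ respectively $-b_1y\eta+\cdots$, each vanishing at the node. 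Hence the complementary map is identically zero at any node, $\nu'|_P$ has rank $\le 1$ there, and both nodes lie in the top degeneracy scheme, giving (2) and (3). Having thereby shown that the support is exactly the finite set (1)--(5) and that $\nu'$ is surjective elsewhere, the degeneracy scheme has the expected codimension $2$ in the smooth surface $C$ and is therefore zero-dimensional, as claimed.
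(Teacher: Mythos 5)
Your proof is correct, and at all smooth points it follows the same route as the paper: local freeness of $\F'$ from the bottom row of \eqref{pout}, local freeness of $\E'$ from $h^0(C_s,\L|_{C_s})=3$ on every fiber, and the identification of the degeneracy condition at a smooth point $P$ of $C_s$ with $h^0(C_s,\L|_{C_s}(-2P))\ge 2$, analyzed component by component on $X\cup Y$ and via the normalization on $Z$. (You get $h^0(\L|_{X\cup Y})=3$ and the surjectivity of the two restriction maps from a Mayer--Vietoris sequence, where the paper uses the pair of exact sequences $0\to\L|_X(-N)\to\L|_{X\cup Y}\to\L|_Y\to 0$ and its mirror; that is a cosmetic difference.) Where you genuinely diverge is at the nodes: the paper's proof of Proposition \ref{finite} only establishes that $D$ is \emph{contained in} the listed set and explicitly defers the claim that the two nodes belong to $D$ to Proposition \ref{mult}, where it drops out of the explicit $2\times 3$ matrices. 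You instead prove membership directly, by noting that on sections vanishing at a node the second component of $\nu'|_P$ factors through the canonical class $\Omega^1_{C/S}\to\w_{C/S}$, which sends $dx\mapsto x\eta$ and $dy\mapsto -y\eta$ and hence annihilates every first-order term at the point $x=y=0$, so $\nu'|_P$ has rank at most $1$ there. This is precisely the mechanism underlying the paper's later matrix computations (the entire second row of those matrices vanishes at the node), and packaging it here makes your proof self-contained where the paper's is not; the only caveat is that at the node of $Z$ the relations $t=xy$, $dx\mapsto x\eta$, $dy\mapsto -y\eta$ hold only modulo higher-order terms, which is harmless for a rank estimate at the point itself. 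What your argument does not give, and the paper's deferred computation does, is the multiplicity with which each point occurs, but that is not part of the present statement.
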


\begin{proof} That $\F'$ is locally free of rank 2 follows immediately 
from it 
being the middle sheaf in the bottom exact sequence of Diagram~\eqref{pout}.
On the other hand, 
to show that $\E'$ is locally free of rank 3, we need to show that 3 is 
the dimension of the space of global sections of $\L$ restricted to each 
fiber. This is clear except for $\L|_{X\cup Y}$. Now, 
there are two exact sequences associated to 
$\L|_{X\cup Y}$:
\begin{equation}\label{two}
\begin{aligned}
0\to\L|_X(-N)\to&\L|_{X\cup Y}\to\L|_Y\to 0,\\
0\to\L|_Y(-N)\to&\L|_{X\cup Y}\to\L|_X\to 0.
\end{aligned}
\end{equation}
The first yields the exact sequence
\begin{equation}\label{first}
0\to\O_X(N)\to\L|_{X\cup Y}\to\w_Y\to0,
\end{equation}
where $\w_Y$ is the canonical sheaf of $Y$. It follows from 
the long exact sequence in cohomology associated to \eqref{first} 
that 
$h^0(X\cup Y,\L|_{X\cup Y})=3$. So, $\E'$ is locally free of rank 3. 

Let $D$ denote the top degeneracy scheme of $\nu'$. Since 
$h^0(X\cup Y,\L|_{X\cup Y})=3$, it follows from the base-change theorem 
and the exactness of \eqref{first} that the two maps in the 
composition below are surjective:
$$
H^0(C,\L) \lra H^0(X\cup Y,\L|_{X\cup Y}) \lra H^0(Y,\L|_Y).
$$
Since $\L|_Y\cong\w_Y$, and since $H^0(Y,\w_Y(-2A))\neq 0$ if and only if 
$A$ is a Weierstrass point of $Y$, it follows that 
$D\cap Y-\{N\}$ consists of the six Weierstrass points of $Y$.

The second exact sequence in \eqref{two} yields the exact 
sequence:
$$
0\lra\w_Y(-N)\lra\L|_{X\cup Y}\lra\O_X(2N)\lra 0.
$$
Since $h^0(Y,\w_Y(-N))=1$ and $h^0(X,\O_X(2N))=2$, it follows as 
before that the two maps in the 
composition below are surjective:
$$
H^0(C,\L) \lra H^0(X\cup Y,\L|_{X\cup Y}) \lra H^0(X,\L|_X).
$$
Thus, since $\L|_X\cong\O_X(2N)$, it follows that $D\cap X-\{N\}$ 
consists of the three points $A\in X-\{N\}$ such that $2A\equiv 2N$.

As explained in Section 2, 
$D$ does not intersect nonhyperelliptic fibers, and 
intersects each hyperelliptic fiber in its 8 Weierstrass points. 

As for the fiber $Z$, consider the following 
composition of natural maps:
$$
H^0(C,\L)\lra H^0(Z,\L|_Z)\lra H^0(\wt Z,\L|_{\wt Z}).
$$
The first map above is surjective by the base-change theorem. The second 
is the pullback map to the normalization, which is injective, and so an 
isomorphism, since both source and target have dimension 3. Here we used the 
Riemann--Roch Theorem and the fact that $\L|_{\wt Z}=\w_{\wt Z}(M_1+M_2)$, 
where $\w_{\wt Z}$ is the canonical sheaf of $\wt Z$, and $M_1$ and $M_2$ are the 
two points over the node $M$ of $Z$. Assuming $M_1$ and $M_2$ are in general 
position, $M_1+M_2$ does not move, and thus, by the Riemann--Roch Theorem, 
$h^0(\wt Z,\w_{\wt Z}(M_1+M_2-2B))=1$ 
for every $B\in\wt Z$ distinct from $M_1$ and 
$M_2$. Hence $D$ intersects $Z$ at most at its node. 

It 
remains only to show that the nodes of $Z$ and of $X\cup Y$ belong to 
$D$, but this will be done in Proposition \ref{mult}. 
What is important for what 
follows is that we have already shown $D$ to be finite.
\end{proof}

\section{An application of Porteous Formula}

\begin{proposition}\label{porteous} 
Let $D$ be the top degeneracy scheme of 
$\nu'\:\E'\to\F'$. Then
$$
\pi_*[D]=72\lambda^\pi-7\delta_0^\pi-7\delta_1^\pi.
$$
\end{proposition}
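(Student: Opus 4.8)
The plan is to apply Porteous Formula to $\nu'\:\E'\to\F'$, just as in Section 2, now that Proposition \ref{finite} guarantees that the top degeneracy scheme $D$ is zero-dimensional, hence of the expected codimension $2$ in the smooth surface $C$. Since $\E'$ has rank $3$ and $\F'$ has rank $2$, the class of the locus where $\nu'$ fails to be surjective is
$$
[D]=c_2({\E'}^{*}-{\F'}^{*})\cap[C]=\big(c_1(\F')^2-c_2(\F')-c_1(\E')c_1(\F')\big)\cap[C],
$$
where the second equality uses that $\E'=\pi^*\pi_*\L$ is pulled back from the curve $S$, so that $c_2(\E')=0$ and $c_1(\E')^2=0$. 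Everything then reduces to the Chern classes of $\E'$ and $\F'$ and the pushforward to $S$ of the resulting degree-$2$ classes on $C$.

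The Chern classes of $\F'$ are read off directly from the bottom row of Diagram~\eqref{pout}, which presents $\F'$ as an extension of $\L$ by $\L\ox\w_{C/S}$: writing $\ell:=c_1(\L)$ and $\w:=c_1(\w_{C/S})$, one gets $c_1(\F')=2\ell+\w$ and $c_2(\F')=\ell(\ell+\w)$. Since $\L=\w_{C/S}(-X)$, I substitute $\ell=\w-x$ with $x:=[X]$, so that the Porteous class becomes a polynomial in $\w$ and $x$ together with $c_1(\E')=\pi^*c_1(\pi_*\L)$. To push forward I need only a handful of intersection numbers on $C$: Mumford's relation $\pi_*(\w^2)=12\lambda^\pi-\delta_0^\pi-\delta_1^\pi$; the adjunction computation $\w\cdot X=\deg\big(\w_{C/S}|_X\big)=\deg\O_X(N)=1$ together with $X^2=-1$ (from $X\cdot(X+Y)=0$ and $X\cdot Y=1$); and, for the $\E'$ term, the projection formula with $\pi_*(\w)$ equal to the relative degree $4$ and $\pi_*(x)=0$ since $X$ maps to a point. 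Feeding these in, the part of $\pi_*[D]$ coming from $\F'$ assembles into a combination of $\lambda^\pi,\delta_0^\pi,\delta_1^\pi$ in which the twist by $-X$ is exactly what pushes the boundary coefficients away from the smooth-curve values.

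The one genuinely non-formal input, and the step I expect to be the main obstacle, is the determination of $c_1(\E')=\pi^*c_1(\pi_*\L)$, i.e.\ of how twisting $\w_{C/S}$ by $-X$ alters the Hodge class. I would extract it from the pushforward of
$$
0\lra\L\lra\w_{C/S}\lra\w_{C/S}|_X\lra0 .
$$
Here $\w_{C/S}|_X=\O_X(N)$ is a degree-$1$ bundle on the genus-$1$ curve $X$, so $\pi_*(\w_{C/S}|_X)$ is a length-$1$ skyscraper at the point $s_0\in S$ below $X\cup Y$, while $R^1\pi_*(\w_{C/S}|_X)=0$. The delicate point is that $c_1(\pi_*\L)=\lambda^\pi-[s_0]$ holds only if the restriction $\pi_*\w_{C/S}\to\pi_*(\w_{C/S}|_X)$ is surjective at $s_0$, so that the length is carried by the cokernel of $\pi_*\L\hookrightarrow\pi_*\w_{C/S}$ and not by $R^1\pi_*\L$; this I would check by exhibiting a global regular differential on $X\cup Y$ restricting to the nonzero holomorphic differential on $X$, using the residue-matching condition across the node and that $N$ is not a Weierstrass point of $Y$. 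Granting $c_1(\pi_*\L)=\lambda^\pi-\delta_1^\pi$ (identifying $[s_0]=\delta_1^\pi$ and $[s_Z]=\delta_0^\pi$), substitution into the expression above collapses to $72\lambda^\pi-7\delta_0^\pi-7\delta_1^\pi$; a reassuring consistency check is that setting $x=0$ recovers the smooth-curve count $72\lambda^\pi$ of Section 2.
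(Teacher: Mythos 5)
Your proposal is correct and follows essentially the same route as the paper: Porteous Formula plus $c_2(\E')=0$, the Whitney sum formula on the bottom row of \eqref{pout} for the Chern classes of $\F'$, the exact sequence $0\to\L\to\w_{C/S}\to\w_{C/S}|_X\to0$ to get $c_1(\pi_*\L)=\lambda^\pi-\delta_1^\pi$, and Mumford's relation $\kappa^\pi=12\lambda^\pi-\delta_0^\pi-\delta_1^\pi$ to conclude. The only cosmetic differences are that you verify the surjectivity of $\pi_*\w_{C/S}\to\pi_*(\w_{C/S}|_X)$ by exhibiting a regular differential (the paper instead argues that $R^1\pi_*\L$ and $R^1\pi_*\w_{C/S}$ are both invertible, so the connecting map is an isomorphism), and you push forward using the intersection numbers $\w\cdot X=1$, $X^2=-1$ rather than the paper's substitutions $[X]=\pi^*\delta_1^\pi-[Y]$ and $[X][Y]=[N]$; both bookkeepings give $\pi_*[D]=7\kappa^\pi-12\lambda^\pi$ and hence the stated formula.
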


\begin{proof} Since, by 
Proposition \ref{finite}, $D$ has the right codimension, we may compute 
its class in $C$ by Porteous Formula 
(\cite{F}, Thm.~14.4, p.~254 or \cite{HM}, Thm.~3.114, p.~161):
$$
[D]=c_2({\E'}^*-{\F'}^*)\cap [C].
$$
Expanding, 
\begin{equation}\label{[D]}
\begin{aligned}
{[D]}=&\Big[\frac{c({\E'}^*)}{c({\F'}^*)}\Big]_2\cap[C]\\
=&\Big[\frac{1-c_1(\E')+c_2(\E')}{1-c_1(\F')+c_2(\F')}\Big]_2\cap[C]\\
=&\big[(1-c_1(\E')+c_2(\E'))(1+c_1(\F')-c_2(\F')+c_1(\F')^2)\big]_2
\cap [C]\\
=&(c_2(\E')-c_1(\E')c_1(\F')+c_1(\F')^2-c_2(\F'))\cap [C].
\end{aligned}
\end{equation}

Now, $\F'$ sits in the middle of an exact sequence of the form:
$$
0\lra\L\ox\omega_{C/S}\lra\F'\lra\L\lra 0.
$$
By the Whitney Sum Formula (\cite{F}, Thm.~3.2(e), p.~50), 
letting $K:=c_1(\omega_{C/S})\cap [C]$, 
\begin{equation}\label{cF}
\begin{aligned}
c_1(\F')\cap [C]=&(2c_1(\L)+c_1(\omega_{C/S}))\cap [C]=3K-2[X],\\
c_2(\F')\cap [C]=&c_1(\L)c_1(\L\ox\w_{C/S})\cap [C]=(K-[X])(2K-[X]).
\end{aligned}
\end{equation}

On the other hand, from the exact sequence
$$
\begin{CD}
0 @>>> \w_{C/S}(-X) @>\cdot X>> \w_{C/S} @>>> \w_{C/S}|_X @>>> 0,
\end{CD}
$$
since $\w_{C/S}|_X=\O_X(N)$ and $H^1(X,\O_X(N))=0$, 
we get the long exact sequence
$$
\begin{CD}
0 @>>> \pi_*(\w_{C/S}(-X)) @>>> \pi_*\w_{C/S} @>\beta >> \pi_*\O_X(N)\\
@>>> R^1\pi_*(\w_{C/S}(-X)) @>\gamma >> R^1\pi_*(\w_{C/S}) @>>> 0.
\end{CD}
$$
As we have seen 
in the proof of Proposition \ref{finite}, 
$\pi_*(\w_{C/S}(-X))$ is a locally free sheaf of rank 3 with 
formation commuting with base change, whence 
$R^1\pi_*(\w_{C/S}(-X))$ is invertible. Since so is 
$R^1\pi_*(\w_{C/S})$, it follows that $\gamma$ is an isomorphism. 
So $\beta$ is surjective. Since $h^0(X,\O_X(N))=1$, it follows 
that
$$
c_1(\pi_*(\w_{C/S}(-X)))=c_1(\pi_*\w_{C/S})-\delta_1=\lambda^\pi-\delta_1^\pi.
$$
Thus 
\begin{equation}\label{cE}
\begin{aligned}
c_1(\E')\cap [C]=&\pi^*(\lambda^\pi-\delta_1^\pi),\\
c_2(\E')\cap [C]=&0.
\end{aligned}
\end{equation}

Replacing \eqref{cF} and \eqref{cE} in \eqref{[D]}, we get
$$
[D]=\pi^*(\delta_1^\pi-\lambda^\pi)(3K-2[X])+(3K-2[X])^2-(K-[X])(2K-[X])
$$
Thus,
\begin{align*}
\pi_*[D]=&(\delta_1^\pi-\lambda^\pi)\pi_*(3K-2[X])+
\pi_*((3K-2[X])^2)-\pi_*((K-[X])(2K-[X]))\\
=&12(\delta_1^\pi-\lambda^\pi)+\pi_*(9K^2-16[N]+4[X]\pi^*\delta_1^\pi)-
\pi_*(2K^2-4[N]+[X]\pi^*\delta_1^\pi)\\
=&12(\delta_1^\pi-\lambda^\pi)+9\kappa^\pi-16\delta_1^\pi
-2\kappa^\pi+4\delta_1^\pi\\
=&7\kappa^\pi-12\lambda^\pi,
\end{align*}
where $\kappa^\pi:=\pi_*K^2$. In the first equality above we used the 
projection formula. In the second, we used that $\pi$ collapses $X$ to 
a point, that $\w_{C/S}|_X\cong\O_X(N)$, that 
$[X]=\pi^*\delta_1^\pi-[Y]$, that $\pi_*K=4[S]$, and that 
$[X][Y]=[N]$. In the third, we used again the 
projection formula and the fact that $\pi$ collapses $X$ to a point. 

Finally, using that (\cite{HM}, (3.110), p.~158)
$$
\kappa^\pi=12\lambda^\pi-\delta_0^\pi-\delta_1^\pi,
$$
we get
$$
\pi_*[D]=7(12\lambda^\pi-\delta_0^\pi-\delta_1^\pi)-12\lambda^\pi,
$$
which yields the stated formula.
\end{proof} 

\section{Multiplicities}

Once we remove the contribution of the points of items (2) to (5) 
of Proposition~\ref{finite} from the expression of $\pi_*[D]$ in 
Proposition \ref{porteous} we get $8\ol h^\pi$, and thus 
\eqref{olh}. This means that the node of $Z$ should appear in $D$ with 
multiplicity 1, and the remaining points, items (3) to (5), 
should count to 17, with multiplicitites. Indeed:

\begin{proposition}\label{mult} The scheme $D$ consists of:
\begin{enumerate}
\item the $8$ Weierstrass points of each smooth hyperelliptic fiber 
of $\pi$, each with multiplicity $1$;
\item\label{mult2} the node of $Z$, with multiplicity $1$;
\item\label{mult3} the node $N$ of $X\cup Y$, with multiplicity $2$;
\item\label{mult4} 
the $3$ points $A\in X-\{N\}$ such that $2A\equiv 2N$, each 
with multiplicity $1$; 
\item\label{mult5} 
the $6$ Weierstrass points of $Y$, each with multiplicity $2$.
\end{enumerate}
\end{proposition}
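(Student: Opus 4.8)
The plan is to compute each multiplicity as the length $\dim_k\O_{D,P}=\dim_k\O_{C,P}/I_P$, where $I_P$ is the ideal generated by the three $2\times 2$ minors of a matrix of $\nu'$ (see \eqref{nu'EF}) in local frames of $\E'$ and $\F'$, and then to evaluate these lengths in adapted coordinates. The essential simplification is that, at a point $P$ on the smooth locus of its fibre, one may choose a local section $\sigma_0$ of $\L$ with $\sigma_0(P)\neq 0$, trivialize $\L$ by $\sigma_0$ and $\w_{C/S}$ by $dt$ for a relative parameter $t$; since $\Omega^1_{C/S}\to\w_{C/S}$ is an isomorphism at $P$, there $\F'$ is the relative sheaf of principal parts and $\nu'$ sends a section to its fibre $1$--jet $(\,\cdot\,,\partial_t\,\cdot\,)$. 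Completing $\sigma_0$ to a frame $\sigma_0,\sigma_1,\sigma_2$ and writing $f_i:=\sigma_i/\sigma_0$, the matrix acquires the unit column $(1,0)$, two of its minors become $\partial_t f_1$ and $\partial_t f_2$, and the third lies in the ideal they generate; hence locally $I_P=(\partial_t f_1,\partial_t f_2)$ is a complete intersection. In other words $D$ is the relative ramification scheme of the map to $\mathbb P^2$ defined by $\L$, and $\mu_P(D)=\dim_k\O_{C,P}/(\partial_t f_1,\partial_t f_2)$. At a node the same description holds with $\O_{C,P}=k[[x,y]]$, $\pi$ given by $u=xy$, and $\partial_t$ replaced by the relative derivation $x\partial_x-y\partial_y$ dual to the Rosenlicht generator $\eta=dx/x$ of $\w_{C/S}$; it is exactly the pushout \eqref{pout} that renders $\F'$ locally free across the node, so that this matrix description persists there.

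First I would dispose of the three points of multiplicity one. At a Weierstrass point of a smooth hyperelliptic fibre the vanishing sequence of $\L=\w_{C_s}$ is $(0,2,4)$, so $f_1,f_2$ vanish along the fibre to orders $2$ and $4$; generality of the family forces $V(\partial_t f_1)$ and $V(\partial_t f_2)$ to meet transversally at $P$, whence $I_P=\mathfrak m_P$ and the multiplicity is $1$, which over all such fibres yields the summand $8\ol h^\pi$. For the three points $A\in X-\{N\}$ with $2A\equiv 2N$, the restricted system $|\O_X(2N)|$ is a base-point-free pencil simply ramified at each such $A$, again giving multiplicity $1$. At the node of $Z$, the hypothesis that the two branches are in general position with respect to the canonical system means precisely that their images have independent tangent directions in $\mathbb P^2$; after a linear change one has $f_1=x+\cdots$ and $f_2=y+\cdots$, so that $(x\partial_x-y\partial_y)f_1=x+\cdots$ and $(x\partial_x-y\partial_y)f_2=-y+\cdots$, the ideal is $(x,y)=\mathfrak m_P$, and the multiplicity is $1$, accounting for the coefficient of $\delta_0^\pi$.

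The hard part will be the two points of multiplicity two, where the fibrewise ramification has length $1$ but $D$ is longer in the transverse (base) direction. At a Weierstrass point $A$ of $Y$ the section concentrated on $X$ restricts to zero on $Y$, so in the notation above one of the $f_i$, say $f_2$, is divisible by the fibre equation $u$; then $\partial_t f_2\in(u)$, while $\partial_t f_1$ has a simple zero along $Y$ at $A$ coming from the ramification of $\w_Y$, so that $\mu_A(D)=1+I_A\big(V(\partial_t f_1),V(\partial_t f_2/u)\big)$. The content of the computation is to show that the transverse factor $\partial_t f_2/u$ vanishes at $A$; I expect this to follow from the limit--linear--series structure on $X\cup Y$ — the node-matching conditions at $N$ constraining the aspect on $Y$ of the section concentrated on $X$ so that it ramifies at the Weierstrass points of $Y$ — forcing the extra unit of length and multiplicity $2$. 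At the node $N$ the twist by $-X$ replaces the local generator $\eta$ of $\w_{C/S}$ by $y\eta$, and I would compute the colength of $\big((x\partial_x-y\partial_y)f_1,(x\partial_x-y\partial_y)f_2\big)$ directly; pinning down the vanishing orders along the two branches imposed by this twist and by $h^0(\L|_{X\cup Y})=3$, and verifying that the resulting length is $2$, is the principal obstacle, and is where the explicit local form of the pushout bundle $\F'$ is indispensable.

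Granting these local lengths, the points of items (3)--(5) count $2+3\cdot 1+6\cdot 2=17$ with multiplicity on the fibre $X\cup Y$, while the node of $Z$ counts $1$ on its fibre. Since apart from the smooth hyperelliptic fibres these are the only fibres meeting $D$, their contributions to $\pi_*[D]$ are $\delta_0^\pi$ and $17\,\delta_1^\pi$, so the excess of $\pi_*[D]$ over $8\ol h^\pi$ is exactly $\delta_0^\pi+17\,\delta_1^\pi$. Subtracting this from the expression of Proposition~\ref{porteous} gives $8\ol h^\pi=72\lambda^\pi-8\delta_0^\pi-24\delta_1^\pi$, which is eight times \eqref{olh}; dividing by $8$ yields $\ol h^\pi=9\lambda^\pi-\delta_0^\pi-3\delta_1^\pi$ and completes the proof of Theorem~\ref{thm}.
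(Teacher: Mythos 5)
Your framework is the right one and is essentially the paper's: at each point of $D$ one writes $\nu'$ in adapted local frames as a $2\times 3$ matrix with unit first column, so that the degeneracy ideal is the complete intersection $(\partial f_1,\partial f_2)$, and at a node the fibre derivative is replaced by the derivation dual to the Rosenlicht generator $dx/x=-dy/y$. Your treatment of the three multiplicity-one cases (Weierstrass points of smooth hyperelliptic fibres, the points $A$ with $2A\equiv 2N$, the node of $Z$) is sound, and your final bookkeeping $2+3+12=17$, $8\ol h^\pi=\pi_*[D]-\delta_0^\pi-17\delta_1^\pi$, is correct.

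But the two multiplicity-two cases are exactly where you stop short, and they are the substantive content of the proposition. At a Weierstrass point $A$ of $Y$ you reduce the claim to showing that the ``transverse factor'' $\partial_t f_2/u$ vanishes at $A$, and then write ``I expect this to follow from the limit--linear--series structure''; that expectation is the theorem, not a proof of it. What is actually needed is that the basis section $s_3$ killed on $Y$ can be normalized to lie in $t^2(u+(t,u^2))\psi$ --- vanishing to order \emph{two} in the base parameter, with its order-$2$ residue restricting on $Y$ to a section of $\w_Y(2N)$ having a \emph{simple} zero at $A$. The paper gets this from the two successive twists $\L(-Y)\supset\L(-2Y)$: surjectivity of $H^0(C,\L(-Y))\to H^0(Y,\w_Y(N))$ lets one arrange $s_3\in(t^2)\psi$, and the vanishing $h^0(Y,\w_Y(2N-3A))=h^0(Y,\w_Y(-2N))=0$ (using that $A$ is a Weierstrass point, $Y$ has genus $2$, and $N$ is not a Weierstrass point) rules out a double zero of $s_3/t^2$ at $A$. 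Neither step is in your argument, and without them the length could a priori be $1$ or $\geq 3$. Likewise at the node $N$ you declare the colength computation ``the principal obstacle'' and do not perform it; the paper does it by exhibiting the frame $s_1,s_2,s_3$ with $s_2\in x(x+(y,x^2))\psi$, $s_3\in y(1+(x,y))\psi$ and $\partial x=x\tau$, $\partial y=-y\tau$, whence the minor ideal contains $-y+(x,y)^2$ and $2x^2+kxy+(x,y)^3$ and has colength $2$ (here $\mathrm{char}\,k\neq 2$ is used). Until these two local computations are carried out, the coefficient $17$ --- and hence the coefficient $-3$ of $\delta_1$ in Theorem~\ref{thm} --- is unproved.
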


\begin{proof} We have seen in Proposition~\ref{finite} that $D$ 
consists at most of the points listed above. 
The multiplicity of a Weierstrass point of a smooth 
hyperelliptic 
fiber of $\pi$ has been established in \cite{HM}, Ex.~3.116, p.~164. 
The multiplicity of 
the node of $Z$ can be computed in essentially the same way as the 
multiplicity computation in \cite{Diaz}; we will do it at the end 
for the sake of completeness.

We will first establish the multiplicities at smooth points, starting 
with \eqref{mult4}. 
Let $A\in X-N$ such that $2A\equiv 2N$. Let $t$ be a local parameter of 
$\O_{S,s}$, where $s:=\pi(A)$. Since $\pi$ is smooth at $A$, 
there is $u\in\O_{C,A}$ 
such that $t,u$ form a regular system of parameters 
for $\O_{C,A}$. Set $\L':=\L(-X)$. The exactness of the natural sequence 
$$
0\to\L'\to\L\to\L|_X\to 0,
$$
coupled with the surjectivity of the restriction map 
$H^0(C,\L)\to H^0(X,\L|_X)$ shown in the proof of 
Proposition~\ref{finite}, yields the exactness of
$$
0\to H^0(C,\L')\to H^0(C,\L)\to H^0(X,\L|_X)\to0.
$$
Since $\L|_X\cong\O_X(2N)$, we may choose a $k[[t]]$-basis $s_1,s_2,s_3$ of $H^0(C,\L)$ 
such that $\psi:=s_1$ generates $\L_A$ and
      \begin{align*}
	s_2\in& (u^2+(t,u^3))\psi,\\
	s_3\in& (t)\psi.
      \end{align*}

Now, since $\L'=\L(-X)$, it follows that $ts_1,ts_2,s_3\in H^0(C,\L')$. 
Moreover, as 
$h^0(X,\L|_X)=2$, they form a 
$k[[t]]$-basis of $H^0(C,\L')$. But $\L'|_X\cong\O_X(3N)$ and, since 
$H^0(Y,\w_Y(-2N))=0$, the restriction map 
$H^0(C,\L')\to H^0(X,\L'|_X)$ is an isomorphism. 
Since $3A\not\equiv 3N$, the vanishing orders of 
sections of $\O_X(3N)$ at $A$ are $0,1,2$. Since $\L'_A=\O_{C,A}t\psi$, the vanishing orders of 
$ts_1|_X$ and $ts_2|_X$ at $A$ 
are 0 and 2, respectively. Thus, replacing $s_3$ by $s_3-(s_3/ts_1)(A)ts_1$, 
we may assume that the vanishing order of $s_3|_X$ at $A$ is 1. So, we 
may assume
      \begin{align*}
	s_1\in& \{\psi\},\\
	s_2\in& (u^2+(t,u^3))\psi,\\
	s_3\in& (u+(t,u^2))t\psi.
	\end{align*}

Thus, $D$ is the zero scheme 
given by the maximal minors of a matrix whose 
entries are in the corresponding entries of
    $$\left[\begin{matrix} \{1\} & u^2+(t,u^3) & tu+(t^2,tu^2)\\ \{0\} & 
2u+(t,u^2) & t+(t^2,tu)) \end{matrix}\right]$$
Since the characteristic of the ground field $k$ is assumed different 
from 2, it follows that the multiplicity of $D$ at $A$ is 1.

We will establish the multiplicity in \eqref{mult5}
now. Let $A$ be a Weierstrass point of $Y$. 
Let $t$ be a local parameter of 
$\O_{S,s}$, where $s:=\pi(A)$. Since $\pi$ is smooth at $A$, 
there is $u\in\O_{C,A}$ such that 
$t,u$ form a regular system of parameters 
for $\O_{C,A}$. 

Let $\L':=\L(-Y)$ and $\L'':=\L(-2Y)$. Then $\L'_A=t\L_A$ and 
$\L''_A=t^2\L_A$. As we have seen in the proof of 
Proposition \ref{finite}, the restriction map
\begin{equation}\label{LLL}
H^0(C,\L)\lra H^0(Y,\L|_Y)
\end{equation}
is surjective. Since $\L|_Y\cong\w_Y$, and $A$ is a Weierstrass point of 
$Y$, there is a basis $s_1,s_2,s_3$ of $H^0(C,\L)$ such that 
$s_1(A)\neq 0$, $s_2|_Y$ vanishes at $A$ with multiplicity 2, and 
$s_3|_Y=0$. Let $\psi$ be the germ of $s_1$ at $A$. So, in $\L_A$, we may 
assume
\begin{align*}
s_1\in&\{\psi\},\\
s_2\in&(u^2+(t,u^3))\psi,\\
s_3\in&(t)\psi.
\end{align*}

Since \eqref{LLL} is surjective, and since $h^0(Y,\L|_Y)=2$, the sections 
$ts_1,ts_2,s_3$ of $H^0(C,\L')$ form a basis. Since 
$\L'|_X\cong\O_X(N)$ and $\L'|_Y\cong\w_Y(N)$, and since 
$h^0(X,\O_X(N))=1$ and $h^0(Y,\w_Y(N))=2$, it follows that the 
restriction map
$$
H^0(C,\L')\lra H^0(Y,\L'|_Y)
$$
is surjective, and $h^0(Y,\L'|_Y)=2$. Thus there is $s'\in H^0(C,\L')$ forming a 
$k[[t]]$-basis of $H^0(C,\L')$ together with $ts_1,ts_2$ such that 
$s'|_Y=0$. Up to replacing $s_3$ by $s_3-fts_1-gts_2$ for $f,g\in k[[t]]$, 
we may assume $s'=s_3$. So $s_3\in (t)^2\psi$.

In addition, it follows that 
the sections $t^2s_1,t^2s_2,s_3$ of $H^0(C,\L'')$ form a basis. 
But $\L''|_Y\cong\w_Y(2N)$. Since $A$ is a Weierstrass point 
of $Y$, and $Y$ has genus~2,
\begin{align*}
h^0(Y,\w_Y(2N-3A))&=h^0(Y,\O_Y(2N-A))=h^1(Y,\O_Y(2N-A))\\
&=h^0(Y,\w_Y(A-2N))=h^0(Y,\w_Y(-2N))=0,
\end{align*}
where the last equality follows from the fact that $N$ is not a 
Weierstrass point of $Y$. Then there must be a section $s''$ of 
$\L''$, forming a $k[[t]]$-basis with $t^2s_1,t^2s_2$, whose 
restriction to $Y$ vanishes at $A$ with multiplicity 1. 
Up to replacing $s_3$ by $s_3-(s_3/t^2s_1)(A)t^2s_1$, 
we may assume that $s''=s_3$. Thus we may assume
$$
s_3\in t^2(u+(t,u^2))\psi.
$$

It follows that $D$ is given at $A$ by the maximal minors of a matrix 
whose entries belong to the corresponding entries of the matrix below:
$$
\left[\begin{matrix}
\{1\} & u^2+(t,u^3) & t^2(u+(t,u^2))\\
\{0\} & 2u+(t,u^2) & t^2+t^2(t,u)
\end{matrix}\right].
$$
Then the minors belong to
$$
2u+(t,u^2),\quad t^2+(t,u)^3,\quad (t,u)^3.
$$
Since the characteristic of $k$ is not 2, 
it follows that $D$ has multiplicity 2 at $A$.

Let us establish the multiplicity in \eqref{mult3} now. Let $t$ be a 
local parameter of 
$\O_{S,s}$, where $s:=\pi(N)$. 
Let $x$ (resp. $y$) 
be a local equation for $Y$ (resp. $X$) at $N$. We may choose them 
such that $t=xy$ in the local ring $\O_{C,N}$. Since the restriction maps
$$
H^0(C,\L)\to H^0(X,\L|_X)\quad\text{and}\quad H^0(C,\L)\to H^0(Y,\L|_Y)
$$
are surjective, $\L|_X\cong\O_X(2N)$ and $\L|_Y\cong\w_Y$, there is a 
basis $s_1,s_2,s_3$ of $H^0(C,\L)$ as a $k[[t]]$-module such that 
$s_1(N)\neq 0$, that $s_2|_Y=0$ and $s_2|_X$ vanishes at $N$, 
necessarily to order 2, and $s_3|_X=0$ and $s_3|_Y$ vanishes at $N$ to 
order 1. Thus, letting $\psi$ be the germ of $s_1$ at $N$, we may assume that
\begin{align*}
s_1\in&\{\psi\},\\
s_2\in&x(x+(y,x^2))\psi,\\
s_3\in&y(1+(x,y))\psi.
\end{align*}  

Now, $\w_{C/S}$ is generated at $N$ by the meromorphic differential 
$\tau:=dx/x=-dy/y$. 
This means that the canonical derivation $\partial$ on 
$\L_N$ induced by the composition of the universal derivation 
$\O_C\to\Omega^1_{C/S}$ with the canonical class 
$\Omega^1_{C/S}\to\w_{C/S}$ satisfies 
$\partial(x)=x\tau$ and $\partial(y)=-y\tau$. 

Thus, $D$ is given at $N$ by the maximal minors of a matrix 
whose entries belong to the corresponding entries of the matrix below:
$$
\left[\begin{matrix}
\{1\} & x(x+(y,x^2)) & y(1+(x,y))\\
\{0\} & 2x^2+x(y,x^2) & -y+y(x,y)
\end{matrix}\right].
$$
Then the minors belong to
$$
2x^2+kxy+(x,y)^3,\quad -y+(x,y)^2,\quad (x,y)^3.
$$
Since the characteristic of $k$ is not 2, 
it follows that $D$ has multiplicity 2 at $N$.

Finally, let us establish the multiplicity in \eqref{mult2}. Let $M$ 
denote the node of $Z$. Let $t$ be a local parameter of 
$\O_{S,s}$, where $s:=\pi(M)$. 
Since $Z$ is a node of the special fiber, 
there are local parameters $x$ and $y$ at $N$ such that 
$t\equiv xy\mod (x,y)^3$. Let $\wt Z$ be the normalization of $Z$ and 
$M_1,M_2\in\wt Z$ the points above $M$. The normalization map induces a 
canonical isomorphism
$$
H^0(Z,\w_Z)\lra H^0(\wt Z,\w_{\wt Z}(M_1+M_2)).
$$
Since the restriction map $H^0(C,\w_{C/S})\to H^0(Z,\w_Z)$ is surjective, 
there is a basis $s_1,s_2,s_3$ of $H^0(C,\L)$ as a $k[[t]]$-module such 
that $s_1(M)\neq 0$ and the pullbacks $\wt s_2$ and $\wt s_3$ of 
$s_2$ and $s_3$ to $\wt Z$ are such that both vanish at $M_1$ and $M_2$ 
but $\wt s_2$ vanishes at $M_1$ to order 2 and $\wt s_3$ vanishes at 
$M_2$ to order 2. Thus, letting $\psi$ be the germ of $s_1$ at $M$, we may 
assume
\begin{align*}
s_1\in&\{\psi\},\\
s_2\in&(x+(x,y)^2)\psi,\\
s_3\in&(y+(x,y)^2)\psi.
\end{align*}  

Let $\tau$ be a generator of $\w_{C/S}$ at $M$, and let $\partial$ be 
the canonical derivation of $\w_{C/S,M}$, 
induced by the composition of the universal derivation 
$\O_C\to\Omega^1_{C/S}$ with the canonical class 
$\Omega^1_{C/S}\to\w_{C/S}$. Choosing $\tau$ appropriately, we have 
$\partial(x)\equiv x \mod (x,y)^2$ and $\partial(y)\equiv -y \mod (x,y)^2$.

Thus, $D$ is given at $M$ by the maximal minors of a matrix 
whose entries belong to the corresponding entries of the matrix below:
$$
\left[\begin{matrix}
\{1\} & x+(x,y)^2 & y+(x,y)^2\\
\{0\} & x+(x,y)^2 & -y+(x,y)^2
\end{matrix}\right].
$$
Then the minors belong to
$$
x+(x,y)^2,\quad -y+(x,y)^2,\quad (x,y)^2,
$$
and thus $D$ has multiplicity 1 at $M$.
\end{proof}

\vspace{0.5cm}

{\smallsc Instituto de Matem\'atica Pura e Aplicada, 
Estrada Dona Castorina 110, 22460-320 Rio de Janeiro RJ, Brazil}

{\smallsl E-mail address: \small\verb?esteves@impa.br?}

\end{document}